
\documentclass[12pt,reqno]{amsart}
\usepackage{enumerate, latexsym, amsmath, amsfonts, amssymb, amsthm, color}

\usepackage{enumitem}[2011/09/28]
\setenumerate{align=left, leftmargin=0pt,labelsep=.5em, labelindent=0\parindent,listparindent=\parindent,itemindent=*}

\def\qed{\hfill$ \blacksquare$}
\def\eor{\hfill$ \square$}
\DeclareMathOperator{\D}{d}

\DeclareMathOperator{\disc}{disc}
\DeclareMathOperator{\I}{Im}

\DeclareMathOperator{\RE}{Re}
\def\pmod #1{\ ({\rm{mod}}\ #1)}

\def\l{\left}
\def\r{\right}
\def\bg{\bigg}
\def\({\bg(}
\def\){\bg)}

\def\f{\frac}

\def\bi{\binom}

\theoremstyle{plain}
\newtheorem{theorem}{Theorem}

\newtheorem{lemma}{Lemma}

\theoremstyle{definition}

\theoremstyle{remark}
\newtheorem{remark}{Remark}

\makeatletter
\@namedef{subjclassname@2020}{%
  \textup{2020} Mathematics Subject Classification}
\makeatother
 \vspace{4mm}

\usepackage{colonequals, mathrsfs, url}
\numberwithin{equation}{section}
\begin{document}

\setcounter{lemma}{0}
\setcounter{theorem}{0}
\setcounter{corollary}{0}
\setcounter{remark}{0}
\setcounter{equation}{0}

\hbox{}
\medskip
\title
{Fast converging irrational series for $ L(2,(\frac d\cdot))$}

\author{Zhi-Wei Sun and Yajun Zhou}

\address[Zhi-Wei Sun]{School of Mathematics, Nanjing
University, Nanjing 210093, People's Republic of China}
\email{zwsun@nju.edu.cn}

\address[Yajun Zhou]{Program in Applied and Computational Mathematics (PACM), Princeton University, Princeton, NJ 08544} \email{yajunz@math.princeton.edu}\curraddr{\textrm{} \textsc{Academy of Advanced Interdisciplinary Studies (AAIS), Peking University, Beijing 100871, P. R. China}}\email{yajun.zhou.1982@pku.edu.cn}

\keywords{Dirichlet $L$-functions, modular forms, series for $\pi$, binomial coefficients.
\newline \indent 2020 {\it Mathematics Subject Classification}. Primary 11M06, 11F03, 11B65; Secondary 05A19, 33F05.
\newline \indent The first author is supported by the Natural Science Foundation of China (grant 12371004).}

\begin{abstract}
By exploring the theory of Guillera--Rogers, we evaluate some infinite series whose summands are quadratic irrationals,
in terms of $\pi$ and special values of Dirichlet $L$-functions $ L_d(2)\equiv L(2,(\frac d\cdot)):=\sum_{k=1}^\infty\left( \frac{d}{k} \right)\frac1{k^2}$. Applying Kronecker's theorem to linear combinations of lattice sums, we obtain geometrically convergent series for $ L_{-56}(2)$, $ L_{-68}(2)$, $ L_{-87}(2)$, $ L_{-111}(2)$, and $ L_{-116}(2)$, which go beyond the solvable cases of Guillera--Rogers.\end{abstract}
\maketitle

\medskip
\section{Introduction}Write $ \binom{n}{k}\colonequals \frac{n!}{k!(n-k)!}$ for binomial coefficients. The so-called Ramanujan series upside-down\begin{align}
\sum_{k=1}^\infty\frac{(ak-b)m^k}{k^{3}\binom{2k}{k}^3}\label{eq:RamaUD}
\end{align}  involving algebraic parameters $a,b,m\in\overline{\mathbb Q} $  were investigated  by Guillera--Rogers \cite{GuilleraRogers2014}. The first identity of this kind\begin{align}
\sum_{k=1}^\infty\frac{21k-8}{k^{3}\binom{2k}{k}^3}=\frac{\pi^{2}}{6}
\end{align}was given by Zeilberger \cite{Zeilberger1993pun} via the WZ method.

In this note, we present some  closed-form evaluations for Ramanujan series upside-down that were not found in the original work of Guillera--Rogers \cite{GuilleraRogers2014}.

In \S\ref{sec:FibLucas}, we deal with irrational series that evaluate to rational multiples of $\pi^2$, as stated in the theorem below.

\begin{theorem}\label{thm:FibLucas}We have \begin{align}
\sum_{k=1}^\infty\frac{3 \left(16 \sqrt{5}-35\right) k-4 \left(5 \sqrt{5}-11\right)}{k^{3}\binom{2k}{k}^3}\left(\frac{1+\sqrt{5}}{2} \right)^{8 k}={}&\frac{71\pi^{2}}{30},\label{eq:GRnew}
\intertext{which is dual to a result of Guillera--Rogers \cite[(65)]{GuilleraRogers2014}:}
\sum_{k=1}^\infty\frac{3 \left(16 \sqrt{5}+35\right) k-4 \left(5 \sqrt{5}+11\right)}{k^{3}\binom{2k}{k}^3}\left(\frac{1-\sqrt{5}}{2} \right)^{8 k}={}&\frac{\pi^{2}}{30}.\label{eq:GRold}
\end{align}\end{theorem}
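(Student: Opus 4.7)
The plan is to deduce \eqref{eq:GRnew} from the Guillera--Rogers identity \eqref{eq:GRold} by exploiting a modular duality. The two series have arguments $\phi^{\pm 8}=\frac{47\pm 21\sqrt{5}}{2}$ (with $\phi=(1+\sqrt{5})/2$) which are Galois-conjugate and mutually reciprocal, since $\phi^{8}\cdot\phi^{-8}=1$. Moreover the linear forms $ak-b$ in the two summands agree in their $\sqrt{5}$-parts ($16\sqrt{5}$ and $5\sqrt{5}$) and differ only by sign flips of the rational parts ($\pm 35$ and $\pm 11$). This symmetry strongly suggests that both identities arise as two evaluations of one Eichler-type period: if $\tau_{0}\in\mathbb{H}$ is a CM point on a suitable modular curve of level $N$ carrying a Hauptmodul $t(\tau)$, then the Fricke involution $w_{N}\colon\tau\mapsto-1/(N\tau)$ should send $\tau_{0}$ to another CM point $\tau_{0}'$ with $t(\tau_{0})=\phi^{-8}$ and $t(\tau_{0}')=\phi^{8}$.

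The steps I would carry out are as follows. First, locate inside the Guillera--Rogers framework the modular data behind \eqref{eq:GRold}: the level $N$, a weight-$4$ form $f$ on $\Gamma_{0}(N)$, and the Hauptmodul $t$ with $t(\tau_{0})=\phi^{-8}$. Second, recast the left-hand side of \eqref{eq:GRold} as an Eichler integral of $f$ between a cusp and $\tau_{0}$, with the coefficient combination $3(16\sqrt{5}+35)k-4(5\sqrt{5}+11)$ encoding the truncation polynomial of $f$ at $\tau_{0}$ in the local parameter $t$. Third, apply $w_{N}$: the transformation law $f\mid_{4}w_{N}=\epsilon f$ for some $\epsilon\in\{\pm 1\}$ converts the integral at $\tau_{0}$ into its companion at $\tau_{0}'$, whose Taylor expansion produces a series in $t(\tau_{0}')^{k}=\phi^{8k}$. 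The Galois action $\sqrt{5}\mapsto-\sqrt{5}$ on the local expansion of $t$ at $\tau_{0}$ versus $\tau_{0}'$ accounts for the sign flips $35\mapsto-35$ and $11\mapsto-11$ in the coefficients, while leaving the $\sqrt{5}$-parts invariant. Fourth, evaluate the correction term produced by $w_{N}$: this is a period of a weight-$4$ Eisenstein series between $\tau_{0}$ and $\tau_{0}'$, and it should contribute exactly the difference $\tfrac{71\pi^{2}}{30}-\tfrac{\pi^{2}}{30}=\tfrac{7\pi^{2}}{3}$ between the right-hand sides of \eqref{eq:GRnew} and \eqref{eq:GRold}.

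The main obstacle is the explicit computation of this Eisenstein-period correction. The natural tools are the Chowla--Selberg formula together with classical closed forms for weight-$4$ Eisenstein series at CM points in $\Q(\sqrt{5})$; the delicate part is reconciling the normalization conventions implicit in the Guillera--Rogers parametrization with those of the present identity, so that the algebraic coefficients of $k$ and $1$ on the left and the rational multiple of $\pi^{2}$ on the right both fall out with the correct constants. Once this period is pinned down, \eqref{eq:GRnew} emerges as a direct corollary of \eqref{eq:GRold} via the Fricke-involution calculation.
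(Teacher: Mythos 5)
Your proposal has a genuine gap at its central step. In the Guillera--Rogers parametrization relevant to $\binom{2k}{k}^{3}$ series (level $N=4$), the argument of the series is $u=16/\{\alpha_4(z)[1-\alpha_{4}(z)]\}$, and the Fricke involution $w_4\colon z\mapsto-\frac{1}{4z}$ sends $\alpha_4$ to $1-\alpha_4$, hence \emph{fixes} $u$. So no Fricke (or Atkin--Lehner) involution can carry the series with argument $\big(\tfrac{1-\sqrt5}{2}\big)^{8}$ into the one with argument $\big(\tfrac{1+\sqrt5}{2}\big)^{8}$. The two identities in fact live at two CM points of the same discriminant $-15$ lying in \emph{different ideal classes} (the paper uses $z=\frac{-1+\sqrt{15}i}{8}$ and $z=\frac{-7+\sqrt{15}i}{16}$, attached to the principal and non-principal reduced forms respectively); the map between them is the Galois action on singular moduli through the class group, not a geometric involution of the modular curve, so there is no ``$f\mid_4 w_N=\epsilon f$'' transformation law to invoke. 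Consequently the ``correction term'' you posit is not a $w_N$-period of an Eisenstein series, and the quantitative heart of the matter --- why each side is a \emph{rational} multiple of $\pi^{2}$ and why the constants are $\tfrac{71}{30}$ and $\tfrac{1}{30}$ --- is never computed in your sketch: asserting that the correction ``should contribute exactly $\tfrac{7\pi^{2}}{3}$'' assumes the answer, and a bare appeal to Chowla--Selberg will not produce these numbers.

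For contrast, the paper proves both identities independently from one formula (Lemma \ref{lm:GR}), which expresses $\I\sum_{k\geq1}(\cdots)$ as an explicit elementary function of $\RE z$ and $\I z$ plus $\frac{\pi^{2}}{15\I z}\RE\big[\widetilde{\mathscr E}_4(4z)-4\widetilde{\mathscr E}_4(z)\big]$, where $\RE\widetilde{\mathscr E}_4$ has the closed form \eqref{eq:GR_ReE} whenever $2\RE z\in\mathbb Z$ or $2\RE\frac1z\in\mathbb Z$. For \eqref{eq:GRnew} the point $z=\frac{-1+\sqrt{15}i}{8}$ satisfies both integrality conditions and the right-hand side evaluates to $\frac{71\pi^{2}}{15\sqrt{15}}$; for \eqref{eq:GRold} the translated point $z=\frac{-7+\sqrt{15}i}{16}$ satisfies $\big(z-\frac{1}{4z}\big)/i>0$, and part \ref{itm:GRb} of the lemma (proved via Ramanujan's reflection formula for $\mathscr E_4$) removes the integrality hypotheses, yielding $\frac{2\pi^{2}}{15\sqrt{15}}$. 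The correct ``dual pair'' statement is the arithmetic one exploited in \S\ref{subsec:FL}: the two identities are Galois conjugates under $\sqrt5\mapsto-\sqrt5$, whose sum and difference give \eqref{eq:FibLucas1}--\eqref{eq:FibLucas2}; but this symmetry does not allow you to derive one identity from the other.
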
\begin{remark}One can define the $n$-th Fibonacci number $ F_n$ and the  $n$-th  Lucas number $ L_n$  recursively by\begin{align}
\left\{\begin{array}{@{}lll}
F_0=0, & F_1=1, & F_{n+2} =F_{n+1}+F_n;\\
L_{0}=2, & L_1=1, & L_{n+2} =L_{n+1}+L_n. \\
\end{array}\right.
\end{align}By $ \mathbb Q(\sqrt{5})$-linear combinations of \eqref{eq:GRnew} and \eqref{eq:GRold}, we may deduce \begin{align}
\sum_{k=1}^{\infty}\frac{20 (12 k-5) F_{8 k}-(105 k-44) L_{8 k}}{k^{3}\binom{2k}{k}^3}={}&\frac{7\pi^{2}}{3},\label{eq:FibLucas1}\\\sum_{k=1}^{\infty}\frac{4 (12 k-5) L_{8 k}-(105 k-44) F_{8 k}}{k^{3}\binom{2k}{k}^3}={}&\frac{12 \pi ^2}{5 \sqrt{5}}.\label{eq:FibLucas2}
\end{align} Expressing the  Lucas numbers in terms of Fibonacci numbers:\begin{align}
L_{n}={}&2F_{n+1}-F_n=2(F_n+F_{n-1})-F_n=F_n+2F_{n-1},
\end{align} we may also transcribe  \eqref{eq:FibLucas1}--\eqref{eq:FibLucas2}  into
\begin{align}
\sum_{k=1}^{\infty}\frac{(135 k-56) F_{8 k}-2 (105 k-44) F_{8 k-1}}{k^{3}\binom{2k}{k}^3}={}&\frac{7\pi^{2}}{3},\label{eq:FibLucas1'}\tag{\ref{eq:FibLucas1}$'$}\\\sum_{k=1}^{\infty}\frac{8 (12 k-5) F_{8 k-1}-3 (19 k-8) F_{8 k}}{k^{3}\binom{2k}{k}^3}={}&\frac{12 \pi ^2}{5 \sqrt{5}},\label{eq:FibLucas2'}\tag{\ref{eq:FibLucas2}$'$}
\end{align}without invoking the Lucas numbers in the summands.
\eor\end{remark}
In \S\ref{sec:specL2}, we prove the next theorem motivated by Z.-W.\ Sun \cite{Sun2025IrrSeries}, where irrational series are expressed  in terms of the Dirichlet $L $-values $ L_d(2)\colonequals \sum_{k=1}^\infty\left( \frac{d}{k} \right)\frac1{k^2}$, with    $ \left(\frac{d}{\cdot}\right)$ being the   Kronecker symbol. Examples of such special Dirichlet $L $-values  include \begin{align}
K\equiv L_{-3}(2)\colonequals\sum_{k=1}^\infty\left( \frac{-3}{k} \right)\frac{1}{k^{2}}= \sum_{k=0}^\infty\left[\frac{1}{(3k+1)^2}-\frac{1}{(3k+2)^2}\right],
\end{align}\begin{align} G\equiv L_{-4}(2)\colonequals\sum_{k=1}^\infty\left( \frac{-4}{k} \right)\frac{1}{k^{2}}= \sum_{k=0}^\infty\frac{(-1)^k}{(2k+1)^2},\end{align}
and \begin{align} L_{-8}(2)\colonequals \sum_{k=1}^\infty\left( \frac{-8}{k} \right)\frac{1}{k^{2}}=\sum_{k=0}^\infty\frac{(-1)^{k(k-1)/2}}{(2k+1)^2}.\end{align} \begin{theorem}\label{thm:2k3k4kL2}\begin{enumerate}[leftmargin=*,
label=\emph{(\alph*)},ref=(\alph*),
widest=d, align=left] \item\label{itm:2kL2}
We have \begin{align}\begin{split}&
\sum_{k=1}^\infty
\frac{132 \left(5 \sqrt{2}+7\right) k-284 \sqrt{2}-401}{k^3 \binom{2 k}{k}^3}\left[ -\big( 2-\sqrt{2} \big)^{12} \right]^k\\={}&16\left[ 23G-14\sqrt{2}L_{-8}(2) \right],
\end{split}\label{eq:d=-352}
\end{align}\begin{align}
\begin{split}&
\sum_{k=1}^\infty
\frac{132 \left(70 \sqrt{29}+377\right) k-4234\sqrt{29}-22801}{k^3 \binom{2 k}{k}^3}\left[ -\big( 5\sqrt{29} -27\big)^{6} \right]^k\\={}&\frac{8}{3}\left[ 29\sqrt{29} L_{-116}(2)-198G\right],
\end{split}\label{eq:d=-928}
\end{align}\begin{align}
\begin{split}
&\sum_{k=1}^\infty\frac{6 \left(17 \sqrt{7}+35\right) k- 35 \sqrt{7}-89}{k^{3}\binom{2k}{k}^3}\left(-2^{11}\right)^k\big(45-17\sqrt{7}\big)^{2k}\\={}&128\left[ 20L_{-8}(2)-7\sqrt{7}L_{-56}(2) \right],\label{eq:d=-448}
\end{split}\end{align}\begin{align}
\begin{split}&
\sum_{k=1}^\infty
\frac{3 \left(85 \sqrt{7}+224\right) k-8\left(14 \sqrt{7}+37\right)}{k^3 \binom{2 k}{k}^3}(3\sqrt{7}-8)^{3k}\\={}&32G-\frac{77\sqrt{7}}{8}L_{-7}(2).
\end{split}\label{eq:d=-112}
\end{align}\item \label{itm:3kL2}We have
\begin{equation}\sum_{k=1}^\infty\f{(3\sqrt3+7)k-\sqrt3-2}{k^3\bi{2k}k^2\bi{3k}k}
[24(12-7\sqrt3)]^k=\f56\l(9\sqrt3 K-16G\r),
\end{equation}
\begin{equation}\begin{aligned}&\sum_{k=1}^\infty\f{3(17\sqrt3+27)k-16\sqrt3-27}{k^3\bi{2k}k^2\bi{3k}k}
[54(265-153\sqrt3)]^k
\\={}&135\l(G-\f{11}{16}\sqrt3 K\r),
\end{aligned}\end{equation}
\begin{equation}\begin{aligned}&\sum_{k=1}^\infty\f{9(11\sqrt6+51)k-2(19\sqrt6+54)}{k^3\bi{2k}k^2\bi{3k}k}
[27(37102-15147\sqrt6)]^k
\\={}&\f{5625}2\l[\sqrt6 L_{-24}(2)-3G\r],
\end{aligned}
\end{equation}
\begin{equation}\begin{aligned}&\sum_{k=1}^\infty\f{15(9\sqrt{13}+26)k-39\sqrt{13}-134}{k^3\bi{2k}k^2\bi{3k}k}
(-1728)^k(18-5\sqrt{13})^{2k}
\\={}&54\l[80K-13\sqrt{13}L_{-39}(2)\r],
\end{aligned}
\end{equation}
\begin{equation}\begin{aligned}&\sum_{k=1}^\infty\f{(91\sqrt{33}+891)k-33\sqrt{33}-225}{k^3\bi{2k}k^2\bi{3k}k}
[32(91\sqrt{33}-523)]^{k}
\\={}&320\l[\f{11}3\sqrt{33}L_{-11}(2)-27K\r],
\end{aligned}
\end{equation}
\begin{equation}\begin{aligned}&\sum_{k=1}^\infty\f{15k(2898\sqrt{37}+3145)k-6438\sqrt{37}-30355}
{k^3\bi{2k}k^2\bi{3k}k}
\\&{}\times(-3)^k(24(2737-450\sqrt{37}))^{2k}\\={}&71874\l[380K-37\sqrt{37}L_{-111}(2)\r],
\end{aligned}
\end{equation}
and
\begin{equation}\begin{aligned}&\sum_{k=1}^\infty\f{15(323\sqrt{145}+13195)k-2407\sqrt{145}-35375}{k^3\bi{2k}k^2\bi{3k}k}
\\{}&\times (-3/5)^k[12(2975-247\sqrt{145})]^{2k}\\={}&23328\l[29\sqrt{145}L_{-87}(2)-375L_{-15}(2)\r].
\end{aligned}
\end{equation}

 \item \label{itm:4kL2}We have
\begin{equation}\begin{aligned}&\sum_{k=1}^\infty\f{5(13\sqrt2+32)k-3(6\sqrt2+11)}{k^3\bi{2k}k^2\bi{4k}{2k}}
[8(457-325\sqrt2)]^k
\\ ={}&490\sqrt2L_{-8}(2)-980G,
\end{aligned}
\end{equation}
\begin{equation}\begin{aligned}&\sum_{k=1}^\infty\f{
5980(11092\sqrt{11}+19437)k-11937508\sqrt{11}-37515813}{k^3\bi{2k}k^2\bi{4k}{2k}}
\\{}&\times(-4)^k(84(1121-338\sqrt{11}))^{4k}\\  ={}&36120^2\l[36G-11\sqrt{11}L_{-11}(2)\r],
\end{aligned}\label{eq:d=-1012}
\end{equation}
and\begin{equation}\begin{aligned}&\sum_{k=1}^\infty\f{140(92\sqrt{17}+2091)k-6868\sqrt{17}-36591}{k^3\bi{2k}k^2\bi{4k}{2k}}
\\{}&\times(-4)^k(12(41-10\sqrt{17}))^{4k}
\\ ={}&415812\left[17\sqrt{17}L_{-68}(2)-90G\right].
\end{aligned}\label{eq:d=-340}
\end{equation}

\end{enumerate}\end{theorem}
\begin{remark}Five rational series similar to those in parts (b) and (c) of Theorem \ref{thm:2k3k4kL2} were initially conjectured by Sun \cite[Conjecture 1.4]{Sun2011}. Their proofs can be found in \cite{GuilleraRogers2014}.\eor\end{remark}
\section{Proof of Theorem \ref{thm:FibLucas}\label{sec:FibLucas}}

\subsection{Evaluations of irrational series via rational multiples of $ \pi^2$}

\subsubsection{Guillera--Rogers theory revisited}
Before recalling a reformulation \cite[Theorem 4.3]{Zhou2025BHS}    of the  Guillera--Rogers theory  in Lemma \ref{lm:GR} below, we need to fix some notations.

Along  with the  Legendre function\begin{align}
P_\nu(1-2t)\colonequals-\frac{\sin(\nu\pi)}{\pi}\int_0^{1}\left[ \frac{X(1-tX)}{1-X} \right]^\nu\frac{\D X}{1-X}
\end{align} of degrees $ \nu\in\left\{ -\frac{1}{4} ,-\frac{1}{3},-\frac{1}{2}\right\}$ defined for $ t\in\mathbb C\smallsetminus[1,\infty)$, we will employ the Legendre--Ramanujan functions \cite[(2.2.11)]{AGF_PartI}
\begin{align}\begin{split}&
R_{\nu}( \xi)\\:={}&\frac{1-\xi^{2}}{P_{\nu}(\xi)}\frac{\D P_{\nu}(\xi)}{\D \xi}+\frac{1-\xi^{2}}{P_{\nu}(-\xi)}\frac{\D P_{\nu}(-\xi)}{\D \xi}\\{}&-\frac{\sin(\nu\pi)}{\pi}\left\{\frac{1}{ [P_{\nu}(\xi)]^{2}\I \frac{iP_{\nu}(-\xi)}{P_{\nu}(\xi)}}-\frac{1}{[P_{\nu}(-\xi)]^{2}\I \frac{iP_{\nu}(\xi)}{P_{\nu}(-\xi)}}\right\}\end{split}\end{align}that are defined for  $\xi\in(\mathbb C\smallsetminus\mathbb R)\cup(-1,1)$ and admit continuous extensions to all $ \xi\in\mathbb C$.  For $ N=4\sin^2(\nu\pi)\in\{2,3,4\}$, introduce the modular invariants\begin{align}
\begin{split}
\alpha_N(z)\colonequals{}&
\left\{1+\frac{1}{N^{6/(N-1)}}\left[ \frac{\eta(z)}{\eta(Nz)} \right]^{24/(N-1)}\right\}^{-1}\\={}&1-\alpha_N\left( -\frac{1}{N z} \right)
\end{split}
\end{align}via the Dedekind eta function $\eta(z)\colonequals e^{\pi iz/12}\prod_{n=1}^\infty(1-e^{2\pi inz})$, where $ z\in\mathfrak H\colonequals \{w\in\mathbb C|\I w>0\}$.  Here, if $z$ is a CM point (namely, subject to an additional constraint that $ [\mathbb Q(z):\mathbb Q]=2$), then  $ \alpha_{N}(z),R_{\nu}(1-2\alpha_{N}(z))\in\overline{\mathbb Q}\cup\{\infty\}$, and there are explicit algorithms to compute these algebraic numbers in question  \cite[\S6]{Zagier2008Mod123}.

With the notations\begin{align}
\begin{split}&
\mathsf\Sigma_{-1/4}^{GR}(z)\\\colonequals {}&\sum_{k=1}^\infty\frac{2[1-2\alpha_2(z)]k-R_{-1/4}(1-2\alpha_2(z))}{k^3\binom{2k}k^{2}\binom{4k}{2k}}\left\{ \frac{64}{\alpha_2(z)[1-\alpha_{2}(z)]} \right\}^{k},
\end{split}
\end{align} \begin{align}
\begin{split}&
\mathsf\Sigma_{-1/3}^{GR}(z)\\\colonequals {}&\sum_{k=1}^\infty\frac{2[1-2\alpha_3(z)]k-R_{-1/3}(1-2\alpha_3(z))}{k^3\binom{2k}k^{2}\binom{3k}{k}}\left\{ \frac{27}{\alpha_3(z)[1-\alpha_{3}(z)]} \right\}^{k},
\end{split}
\end{align}and\begin{align}
\begin{split}&
\mathsf\Sigma_{-1/2}^{GR}(z)\\\colonequals {}&\sum_{k=1}^\infty\frac{2[1-2\alpha_4(z)]k-R_{-1/2}(1-2\alpha_4(z))}{k^3\binom{2k}k^{3}}\left\{ \frac{16}{\alpha_4(z)[1-\alpha_{4}(z)]} \right\}^{k},
\end{split}
\end{align}as well as the correspondence  $ N=4\sin^2(\nu\pi)\in\{2,3,4\}$ for  $ \nu\in\left\{ -\frac{1}{4} ,-\frac{1}{3},-\frac{1}{2}\right\}$, we may restate part of  \cite[Theorem 4.3]{Zhou2025BHS}   in the lemma below.\begin{lemma}\label{lm:GR}
For $z\in\mathfrak{H}$ satisfying~\begin{align}
\left\{\begin{array}{@{}l}
|4\alpha_N(z)[1-\alpha_{N}(z)]|\geq1, \alpha_{N}(z)\neq\frac12, \\
|\RE z|\leq\frac12,\big|z+\frac1N\big|\geq\frac1N,\big|z-\frac1N\big|\geq\frac1N, \\
\end{array}\right.
\label{ineq:z}\end{align}     we have

\begin{align}
\begin{split}&\I\mathsf\Sigma_{\nu}^{GR}(z)
\\={}&\frac{4\pi^{2}}{3\I z}\left( \RE z-\frac{\RE z}{2|\RE z|} \right)\left[ \left( \RE z-\frac{\RE z}{2|\RE z|} \right)^2+3(\I z)^2+\frac{12-N}{4N} \right]\\{}&+\frac{4\pi^{2}\RE\Big[  \widetilde{\mathscr E}_4(Nz)-N\widetilde{\mathscr E}_4(z) \Big]}{N(N^{2}-1)\I z},\end{split}\label{eq:GR}
\end{align}with     \begin{align} \widetilde{\mathscr E}_4(z)\colonequals\int_z^{i\infty}[1-E_4(w)](z-w)(\overline z-w)\D w\label{eq:EichlerE4}\end{align}defined through the Eisenstein series\begin{align}
E_4(z)\colonequals \displaystyle 1+240\sum_{n=1}^\infty\dfrac{n^3e^{2\pi inz}}{1-e^{2\pi inz}},\quad z\in\mathfrak H.
\end{align}Here, one has (cf.\  \cite[Proposition 3]{GuilleraRogers2014})\begin{align}\begin{split}
&\RE\widetilde{\mathscr E}_4(z)\\={}&\begin{cases}0 &\text{if } 2\RE z\in\mathbb Z, \\
-\frac{\RE z}{3}\left[ \frac{|z|^{2}+2(\I z)^2}{|z|^{4}} +|z|^{2}+2(\I z)^2-5\right] & \text{if }2\RE \frac{1}{z}\in\mathbb Z, \\
\end{cases}
\end{split}\label{eq:GR_ReE}
\end{align}for all $ z\in\mathfrak H$.\qed\end{lemma}\begin{remark}One can derive the second entry on the right-hand side  of   \eqref{eq:GR_ReE} from the following functional equation     \cite[(4.53)]{Zhou2025BHS}\begin{align}\begin{split}&
\frac{|z|^{2}}{(\I z)^2}\RE\widetilde {\mathscr E}_4\left( -\frac{1}{z} \right)-
\frac{\RE\widetilde {\mathscr E}_4(z)}{(\I z)^2}\\={}&\frac{\RE z}{3(\I z)^2}\left[ \frac{|z|^{2}+2(\I z)^2}{|z|^{4}} +|z|^{2}+2(\I z)^2-5\right],\quad  z\in\mathfrak H,
\end{split}\label{eq:ReE4_refl}
\end{align} which descends from Ramanujan's reflection formula \cite[p.\ 276]{RN2}\begin{align}
\mathscr E_4(z)-z^2\mathscr E_4\left( -\frac{1}{z} \right)=-\frac{z^4-5 z^2+1}{3 z}+\frac{30 i\zeta (3) \left(z^2-1\right) }{\pi ^3}
\end{align} for    $\mathscr E_4(z)\colonequals\int_{z}^{i\infty} [1-E_4(w)](z-w)^2\D w=\frac{60 i}{\pi^3}\sum_{k=1}^\infty\frac{1}{k^3(e^{-2\pi ik z}-1)}$.
\eor\end{remark}\begin{remark}We may compute the following special values{\allowdisplaybreaks
\begin{align}
2i[1-2\alpha_{4}(z)]\I z={}&\begin{cases}-\frac{3}{4} \left(16 \sqrt{5}-35\right) & \text{if } z=\frac{-1+\sqrt{15}i}{8}, \\
-\frac{3}{8}\left(16 \sqrt{5}+35\right) &\text{if } z=\frac{-7+\sqrt{15}i}{16}, \\
\end{cases}\\-iR_{-1/2}(1-2\alpha_{4}(z))\I z={}&\begin{cases}5 \sqrt{5}-11 & \text{if } z=\frac{-1+\sqrt{15}i}{8}, \\
\frac{5 \sqrt{5}+11}{2} &\text{if } z=\frac{-7+\sqrt{15}i}{16}, \\
\end{cases}
\end{align}
}from explicit algorithms   \cite[\S6]{Zagier2008Mod123}. Here, both $z=\frac{-1+\sqrt{15}i}{8}$ and $ z=\frac{-7+\sqrt{15}i}{16}$ fit into \eqref{ineq:z} for $N=4$.
\eor\end{remark}With the lemma above, one  immediately arrives at the next theorem.\begin{theorem}\label{thm:ImGR}If  $z$ is a CM point satisfying \eqref{ineq:z} and  $\RE\big[   \widetilde{\mathscr E}_4(Nz)-N\widetilde{\mathscr E}_4(z)\big]$ can be computed from \eqref{eq:GR_ReE} , then\begin{align}
\I\mathsf\Sigma_{\nu}^{GR}(z)\label{eq:ImGR_N}
\end{align} is a rational multiple of $ \pi^2$, and all the summands of the series above are explicitly computable algebraic numbers.
\qed
\end{theorem} \subsubsection{Proof of \eqref{eq:GRnew}}
With $z=\frac{-1+\sqrt{15}i}{8}$, we have \begin{align}
2\RE(4z)=-1\in\mathbb Z,\quad 2\RE\frac{1}{z}=-1\in\mathbb Z.
\end{align}Therefore, the right-hand side of \eqref{eq:GR} reduces to \begin{align}
\begin{split}&
\left\{\frac{4\pi^{2}}{3\I z}\left( \RE z-\frac{\RE z}{2|\RE z|} \right)\left[ \left( \RE z-\frac{\RE z}{2|\RE z|} \right)^2+3(\I z)^2+\frac{1}{2} \right]\right.\\&{}\left.\left.{}+\frac{4\pi^{2}\RE z}{45\I z}\left[ \frac{|z|^{2}+2(\I z)^2}{|z|^{4}} +|z|^{2}+2(\I z)^2-5\right]\right\}\right|_{z=\frac{-1+\sqrt{15}i}{8}}\\={}&\frac{71 \pi ^2}{15 \sqrt{15}},
\end{split}
\end{align}as expected.
\subsubsection{Proof of \eqref{eq:GRold} }The series evaluation \eqref{eq:GRold}  originally appeared as \cite[(65)]{GuilleraRogers2014}, and drew on a special CM\ point $z'= \frac{9+\sqrt{15}i}{16}=\frac{-7+\sqrt{15}i}{16}+1=z+1$ that failed the criterion $ |\RE z'|<\frac12$ [cf.\ \eqref{ineq:z}].

However, by choosing the CM\ point $z'= \frac{9+\sqrt{15}i}{16}$, Guillera and Rogers gained  a relation  $ 2\RE\frac1{z'}=2\RE\big( \frac{3}{2}-\frac{\sqrt{15}i}{6}   \big)\in\mathbb Z$. Consequently, \begin{align}
\begin{split}
\RE\widetilde{\mathscr E}_4(z')={}&-\frac{\RE z'}{3}\left[ \frac{|z'|^{2}+2(\I z')^2}{|z'|^{4}} +|z'|^{2}+2(\I z')^2-5\right]\\={}&\frac{387}{2048}
\end{split}
\end{align} was readily computable from \eqref{eq:GR_ReE}. By the translational invariance  $ E_4(w+1)=E_4(w)$, one has $ \RE\widetilde{\mathscr E}_4(z')=\RE\widetilde{\mathscr E}_4(z'-1)=\RE\widetilde{\mathscr E}_4(z)$, so\begin{align}
\begin{split}&
\RE\widetilde{\mathscr E}_4\left( \frac{-7+\sqrt{15}i}{16} \right)=\frac{387}{2048}\\={}&-\RE\widetilde{\mathscr E}_4\left( \frac{7+\sqrt{15}i}{16} \right)=-\RE\widetilde{\mathscr E}_4\left( -\frac{1}{4z} \right).
\end{split}
\end{align}Thus, one can evaluate $ \RE\widetilde{\mathscr E}_4\left( 4z \right)=-\frac{1}{32}$ by resorting to \eqref{eq:ReE4_refl}. Afterwards, we complete the right-hand side of \eqref{eq:GR}, arriving at a closed-form evaluation $ \I\Sigma_{-1/2}^{GR}\left( \frac{-7+\sqrt{15}i}{16} \right)=\frac{\pi ^2}{15 \sqrt{15}}$, as claimed.

\subsection{Series involving Fibonacci numbers and Lucas numbers\label{subsec:FL}}Now,  rewriting the summands of \eqref{eq:GRnew} and \eqref{eq:GRold} with\begin{align}
\left(\frac{1\pm\sqrt{5}}{2}\right)^{8k}=\frac{L_{8k}\pm \sqrt{5}F_{8k}}{2}
\end{align}as well as the observation that \begin{align}
\begin{split}
&\left[ 3\left( \pm 16\sqrt{5}-35 \right)k-4\left( \pm5\sqrt{5}-11 \right)\right]\left( L_{8k}\pm \sqrt{5}F_{8k} \right)\\={}&\left[ \pm(48k-20)\sqrt{5}+44-105k \right]\left( L_{8k}\pm \sqrt{5}F_{8k} \right)\\={}&\left( 44-105k \right)L_{8k}+5(48k-20)F_{8k}\pm\sqrt{5}[(44-105k)F_{8k}\\&{}+4(12k-5)L_{8k}],
\end{split}
\end{align}one arrives at\begin{align}\begin{split}
&
\sum_{k=1}^{\infty}\frac{20 (12 k-5) F_{8 k}-(105 k-44) L_{8 k}}{k^{3}\binom{2k}{k}^3}
\\&{}+(-1)^{m}\sqrt{5}\sum_{k=1}^{\infty}\frac{4 (12 k-5) L_{8 k}-(105 k-44) F_{8 k}}{k^{3}\binom{2k}{k}^3}\\={}&\begin{cases}\frac{71\pi^{2}}{15} & \text{if }m=0, \\
-\frac{\pi^{2}}{15} & \text{if }m=1. \\
\end{cases}
\end{split}\label{eq:FLpm}\end{align}Taking \eqref{eq:FLpm}$ _{m=0}+$\eqref{eq:FLpm}$ _{m=1}$ [resp.\ \eqref{eq:FLpm}$ _{m=0}-$\eqref{eq:FLpm}$_{m=1}$], we get \eqref{eq:FibLucas1}
 [resp.\ \eqref{eq:FibLucas2}].
\section{Proof of Theorem \ref{thm:2k3k4kL2}\label{sec:specL2}}

 \subsection{Evaluations of irrational series via Epstein zeta functions}
Fix a Hecke congruence group \begin{align} \varGamma_0(N):=\left\{ \left.\left(\begin{smallmatrix}a&b\\c&d\end{smallmatrix}\right)\right| a,b,c,d\in\mathbb Z; ad-bc=1;c\equiv0\pmod N\right\}\end{align}of level $N\in\mathbb Z_{>1}$. The Epstein zeta function on $\varGamma_0(N)$   is defined  as  \cite[Chap.\ II, (2.14)]{GrossZagierI} \begin{align}
E^{\varGamma_0(N)}(z,s):=\sum_{\hat\gamma\in\left.\left(\begin{smallmatrix}*&*\\0&*\end{smallmatrix}\right)\right\backslash\varGamma_0(N)}[\I(\hat\gamma z )]^{s},\quad z\in\mathfrak H,\RE s>1.
\end{align}As pointed out in \cite[Theorem 4.3]{Zhou2025BHS}, part of the Guillera--Rogers theory \cite[(46)]{GuilleraRogers2014}  can be reformulated into the lemma below.\begin{lemma}\label{lm:GR_EZF}For $ z\in\mathfrak H$ satisfying the constraints in \eqref{ineq:z}, along with   $ N=4\sin^2(\nu\pi)\in\{2,3,4\}$ corresponding to  $ \nu\in\left\{ -\frac{1}{4} ,-\frac{1}{3},-\frac{1}{2}\right\}$,  we have\begin{align}
\begin{split}
\RE\mathsf\Sigma_{\nu}^{GR}(z)={}&\frac{8\pi^{2}}{3}\left[E^{\varGamma_0(N)}\left(-\frac{1}{Nz},2\right)- E^{\varGamma_0(N)}(z,2) \right]\\={}&\frac{8\pi^{2}}{3\left( N^{2}-1 \right)}\left[ E^{\varGamma_0(1)}(z,2) -E^{\varGamma_0(1)}(Nz,2)\right].
\end{split}\label{eq:ReEpstein}
\end{align}\qed
\end{lemma} Guillera and Rogers \cite{GuilleraRogers2014} tabulated many closed forms of $\RE\mathsf\Sigma_{\nu}^{GR}(z)$, where both $E^{\varGamma_0(1)}(Nz,2)$ and $E^{\varGamma_0(1)}(z,2)$ were ``solvable'' lattice sums in the sense of Glasser--Zucker \cite{GlasserZucker1980} and  Zucker--Robertson \cite{ZuckerRobertson1984}.

In the rest of this section, we will evaluate more cases of $\RE\mathsf\Sigma_{\nu}^{GR}(z)=\mathsf\Sigma_{\nu}^{GR}(z)$ for $ \RE z=\frac12$, where  $E^{\varGamma_0(1)}(Nz,2)$ and $E^{\varGamma_0(1)}(z,2)$ may not be individually ``solvable'', but the difference  $E^{\varGamma_0(1)}(Nz,2)-E^{\varGamma_0(1)}(z,2)$ can nevertheless be expressed in terms of Dirichlet $L$-values.\subsection{Kronecker's theorem in action}\subsubsection{Discriminants and $j$-invariants\label{subsubsec:disc_j}}Let $ \disc(z)$ be the discriminant for the minimal polynomial of a CM point $z$.
 We normalize Klein's $j$-in\-var\-i\-ant as\begin{align}
j(z)\colonequals{}&\frac{2^{8}\{1-\alpha_{4}(z/2)+[\alpha_{4}(z/2)]^{2}\}^{3}}{[\alpha_{4}(z/2)]^{2}[1-\alpha_{4}(z/2)]^2}.
\end{align}If two CM points $ z_1$ and $z_2$ satisfy $ \disc(z_1)=\disc(z_2)=D$, then $j(z_1)$ and $j(z_2)$ have  the same minimal polynomial as  $ j\left( \frac{D+i\sqrt{-D}}{2} \right)$.

For the purpose of this subsection, we will distinguish two scenarios:\begin{enumerate}[leftmargin=*,
label={(\arabic*{$^\circ$})},ref=(\arabic*{$^\circ$}),
widest=d, align=left]
\item\label{itm:sc1}
The algebraic numbers $j(z)$ and $j(Nz)$ are distinct degree-$4$ irrationals  that share the same minimal polynomial.\item\label{itm:sc2} The algebraic numbers $j(z)$ and $j(Nz)$ both have degrees that do not exceed $2$.
\end{enumerate}

In Tables \ref{tab:ReGRa}--\ref{tab:ReGRc}, the entries corresponding to these two scenarios are separated by  horizontal lines. Within each scenario, the entries in these tables are sorted by descending $\I z$.
 \begin{table}[t]\caption{\label{tab:ReGRa}Special CM points associated with irrational series $ \mathsf\Sigma_{-1/2}^{GR}(z)$ that are expressible via Dirichlet $L$-values}

{\tiny \begin{align*}\begin{array}{@{\,}c@{\,}|@{\,}c@{\,}c@{\,}c|@{\,}l@{\,}}\hline\hline
z-\frac12 & \frac{{}1-2\alpha_{4}(z)}{\I z} & \frac{R_{-1/2}(1-2\alpha_{4}(z))}{\I z} & \frac{16}{\alpha_4(z){[}1-\alpha_{4}(z)]}&\frac{\pi^2\mathsf\Sigma_{-1/2}^{GR}(z)}{2\disc(z)} \\\hline
\vphantom{\frac{\frac\int\int}1}\frac{i}{\sqrt{7}} & \frac{3 \left(17 \sqrt{7}+35\right)}{8 \sqrt{2}} & \frac{35 \sqrt{7}+89}{8 \sqrt{2}} & \begin{smallmatrix}-2^{11} \big(45-17 \sqrt{7}\big)^2\end{smallmatrix} &\begin{smallmatrix}&L_{-56}(2)L_8(2)-L_{-8}(2)L_{56}(2)\hfill\\={}&\frac{\pi^2}{8\sqrt{2}}\left[L_{-56}(2) - \frac{20L_{-8}(2)}{7\sqrt{7}}\right]\hfill\end{smallmatrix}\\[10pt]\frac{i}{\sqrt{22}} & \begin{smallmatrix}66 \left(5 \sqrt{2}+7\right) \end{smallmatrix}&\begin{smallmatrix} 284 \sqrt{2}+401\end{smallmatrix} & \begin{smallmatrix}-\left(2-\sqrt{2}\right)^{12}\end{smallmatrix}&\begin{smallmatrix}&L_{-8}(2)L_{44}(2)-L_{-4}(2)L_{88}(2)\hfill\\={}&\frac{7\pi^{2}}{22 \sqrt{11}}\left[L_{-8}(2) - \frac{23G}{14 \sqrt{2}}\right]\hfill\end{smallmatrix} \\[10pt]
\frac{i}{\sqrt{58}} & \begin{smallmatrix}198 \left(70 \sqrt{29}+377\right) \end{smallmatrix}&\begin{smallmatrix} 3 \left(4234 \sqrt{29}+22801\right)\end{smallmatrix} & \begin{smallmatrix}-\left(5 \sqrt{29}-27\right)^6\end{smallmatrix} &\begin{smallmatrix}&L_{-4}(2)L_{232}(2)-L_{-116}(2)L_{8}(2)\hfill\\={}&\frac{99\pi^{2}}{116 \sqrt{58}}\left[G - \frac{29 \sqrt{29}L_{-116}(2)}{198}\right]\hfill\end{smallmatrix}\\[8pt]
\hline\vphantom{\frac{\frac\int\int}1}
\frac{i}{2\sqrt{7}} & \begin{smallmatrix}6 \left(85 \sqrt{7}+224\right) \end{smallmatrix}& \begin{smallmatrix}32 \left(14 \sqrt{7}+37\right) \end{smallmatrix} & \begin{smallmatrix}\big( 3\sqrt{7} -8\big)^{3}\end{smallmatrix}&\begin{smallmatrix}&\frac{33}{4}\zeta(2)L_{-7}(2)-16L_{-4}(2)L_{28}(2)\hfill\\={}&\frac{11 \pi ^2}{8}\left[L_{-7}(2) - \frac{256G}{77 \sqrt{7}}\right]\hfill\end{smallmatrix} \\
[8pt]
\hline\hline
\end{array}\end{align*}}
\end{table}

\subsubsection{Reductions of lattice sums to Dirichlet $L$-values}
For Scenario \ref{itm:sc1} mentioned  in \S\ref{subsubsec:disc_j}, we will look for CM points $ z_1$ and $z_2$ such that $ j(z)$, $ j(Nz)$, $ j(z_1)$, and $j(z_2)$ are four different degree-$4$ irrationals  that share the same minimal polynomial. Exploiting Kronecker's theorem \cite[Theorem 4]{Siegel1980}, we will represent both\begin{align}
E^{\varGamma_0(1)}(Nz,2)-E^{\varGamma_0(1)}(z,2)+E^{\varGamma_0(1)}(z_{1},2)-E^{\varGamma_0(1)}(z_{2},2)\label{eq:EZF+-+-}
\end{align} and  \begin{align}
E^{\varGamma_0(1)}(Nz,2)-E^{\varGamma_0(1)}(z,2)-E^{\varGamma_0(1)}(z_{1},2)+E^{\varGamma_0(1)}(z_{2},2)\label{eq:EZF+--+}
\end{align}through special Dirichlet $L$-values, from which we can deduce the closed form of $E^{\varGamma_0(1)}(Nz,2)-E^{\varGamma_0(1)}(z,2)$. 

Concretely speaking, if $ j(w_1),\dots,j(w_h)$ are distinct algebraic numbers sharing the same minimal polynomial of degree $h$,  and $\disc(w_{1})=\dots=\disc(w_h)=D\notin\{-3,-4\}$ is a fundamental discriminant\footnote{This means that either (A) or (B) is true, where (A)~$ D\neq1$, $ D\equiv1\pmod4$, and $D$ is square-free; (B)~$D\equiv 8\text{ or }12\pmod{16} $ and $ D/4 $ is square-free. } that  factorizes into the product of two fundamental discriminants  $ d_1$ and $d_2$, then Kronecker's theorem \cite[Theorem 4]{Siegel1980} assumes the following form (cf.\ \cite[(2.3)]{ZuckerRobertson1984}):\footnote{Here, the normalizing factor $ -\frac{d_1d_2}{4\zeta(4)}=-\frac{2D}{45\pi^{4}}$ converts the Dedekind zeta function (appearing in the statement of \cite[Theorem 4]{Siegel1980}) to the Epstein zeta function.}\begin{align}
\sum_{j=1}^h\chi _{d_{1},d_2}(w_j)E^{\varGamma_0(1)}(w_{j},2)={}&-\frac{d_{1}d_2}{4\zeta(4)} L_{d_1}(2)L_{d_2}(2),
\label{eq:fundKronecker}
\end{align}where the character $\chi _{d_{1},d_2}(w_j)$ is explicitly computable from the Kronecker symbol involving the norm of the integral ideal associated with $w_j$. Later on, we will refer to this as a ``fundamental application'' of Kronecker's theorem.

For ``non-fundamental applications'' of Kronecker's theorem, one sometimes needs to twist the right-hand side of \eqref{eq:fundKronecker} with additional factors, as done by Glasser--Zucker \cite{GlasserZucker1980} and  Zucker--Robertson \cite{ZuckerRobertson1984}.

For Scenario \ref{itm:sc2} mentioned  in \S\ref{subsubsec:disc_j}, we will mostly fall back on tabulated entries in \cite{GlasserZucker1980}, some of which were consequences of  ``non-fundamental applications'' as well. To make full use of existent tables, we will also exploit  \eqref{eq:fundKronecker} in the case where $h=2$.    \subsubsection{Proof of Theorem \ref{thm:2k3k4kL2}\ref{itm:2kL2}} None of the following numbers constitutes a fundamental discriminant:\begin{align}
\disc\left( \frac{1}{2}+\frac{ i}{\sqrt{7}}\right)={}&-448=-2^{6}\cdot7,\\\disc\left( \frac{1}{2}+\frac{ i}{\sqrt{22}}\right)={}&-352=-2^5\cdot11,\\\disc\left( \frac{1}{2}+\frac{ i}{\sqrt{58}}\right)={}&-928=-2^{5}\cdot29.
\end{align} Nevertheless, Kronecker's theorem works without requiring additional twist factors for these three cases. To flesh out, with \begin{align}
z_{1}=\frac{1}{2}+\frac{\sqrt{-\disc(z)/4}i}{4}=\begin{cases}\frac12+\sqrt{7}i & \text{if }z=\frac{1}{2}+\frac{ i}{\sqrt{7}}, \\
 \frac12+\frac{\sqrt{22}i}{2}& \text{if }z=\frac{1}{2}+\frac{ i}{\sqrt{22}}, \\ \frac12+\frac{\sqrt{58}i}{2}&\text{if }z=\frac{1}{2}+\frac{i}{\sqrt{58}},
\end{cases}
\end{align}and  \begin{align}
z_{2}=\sqrt{-\disc(z)/4}i=\begin{cases}4\sqrt{7}i & \text{if }z=\frac{1}{2}+\frac{ i}{\sqrt{7}}, \\
2\sqrt{22}i & \text{if }z=\frac{1}{2}+\frac{ i}{\sqrt{22}}, \\2\sqrt{58}i&\text{if }z=\frac{1}{2}+\frac{i}{\sqrt{58}},
\end{cases}
\end{align} we obtain\begin{align}
\begin{split}
&-\frac{4\zeta(4)}{\disc(z)}\left[ E^{\varGamma_0(1)}(4z,2)- E^{\varGamma_0(1)}(z,2)-E^{\varGamma_0(1)}(z_{1},2)+E^{\varGamma_0(1)}(z_{2},2)\right]
\\={}&\begin{cases}L_{-56}(2)L_{8}(2) & \text{if }z=\frac{1}{2}+\frac{ i}{\sqrt{7}}, \\
L_{-8}(2)L_{44}(2) & \text{if }z=\frac{1}{2}+\frac{ i}{\sqrt{22}}, \\
L_{-4}(2)L_{232}(2) & \text{if }z=\frac{1}{2}+\frac{i}{\sqrt{58}}, \\
\end{cases}\end{split}
\end{align}and\begin{align}
\begin{split}
&\frac{4\zeta(4)}{\disc(z)}\left[ E^{\varGamma_0(1)}(4z,2)- E^{\varGamma_0(1)}(z,2)+E^{\varGamma_0(1)}(z_{1},2)-E^{\varGamma_0(1)}(z_{2},2)\right]
\\={}&\begin{cases}L_{-8}(2)L_{56}(2) & \text{if }z=\frac{1}{2}+\frac{ i}{\sqrt{7}}, \\
L_{-4}(2)L_{88}(2) & \text{if }z=\frac{1}{2}+\frac{ i}{\sqrt{22}}, \\
L_{-116}(2)L_{8}(2) & \text{if }z=\frac{1}{2}+\frac{i}{\sqrt{58}}, \\
\end{cases}\end{split}
\end{align} which verify the first three entries in Table \ref{tab:ReGRa} as well as the first three formulae in Theorem \ref{thm:2k3k4kL2}\ref{itm:2kL2}.

The last entry in Table \ref{tab:ReGRa} [corresponding to the last formula in Theorem \ref{thm:2k3k4kL2}] deserves a little more explanation. For $ z=\frac{1}{2}+\frac{i}{2\sqrt{7}}$, we have \begin{align}
\disc(z)=-7\quad\text{and}\quad \disc(4z)=-112=-2^{4}\cdot7.
\end{align}Here, $ j(z)=-3^3\cdot5^3=j\left( \frac{1+\sqrt{7}i}{2} \right)$ is an integer, so one may read off the corresponding special value\begin{align}
E^{\varGamma_0(1)}\left(\frac{1+\sqrt{7}i}{2},2\right)=\frac{7\zeta(2)L_{-7}(2)}{4\zeta(4)}=\frac{105}{4\pi^2}L_{-7}(2)
\end{align} from \cite[Table VI]{GlasserZucker1980}. Since both $ j(\cdot)$ and $ E^{\varGamma_0(1)}(\cdot,2)$ are $ \varGamma_0(1)$-invariants, we have\begin{align}
E^{\varGamma_0(1)}\left(z,2\right)=E^{\varGamma_0(1)}\left(\frac{1+\sqrt{7}i}{2},2\right)=\frac{105}{4\pi^2}L_{-7}(2).
\end{align}Meanwhile, $ j(4z)=(32520-12285 \sqrt{7})^3$ is a quadratic irrational, whose Galois conjugate is $ j(2\sqrt{7}i)=(32520+12285 \sqrt{7})^3$. Looking up \begin{align}
E^{\varGamma_0(1)}\big(2\sqrt{7}i,2\big)=\frac{56}{4\zeta(4)}\left[\frac{41}{64} \zeta(2) L_{-7}(2)+L_{-4}(2)L_{28}(2)\right]\label{eq:a-112}
\end{align}from  \cite[Table VI]{GlasserZucker1980},  and applying \cite[Theorem 4]{Siegel1980} to\begin{align}\begin{split}&
E^{\varGamma_0(1)}\left(4z,2\right)-
E^{\varGamma_0(1)}\big(2\sqrt{7}i,2\big)=-\frac{112}{4\zeta(4)}L_{-4}(2)L_{28}(2),\label{eq:112minus}
\end{split}
\end{align}we find\begin{align}\begin{split}
E^{\varGamma_0(1)}\left(4z,2\right)={}&\frac{56}{4\zeta(4)}\left[\frac{41}{64} \zeta(2) L_{-7}(2)-L_{-4}(2)L_{28}(2)\right]\\={}&\frac{15 \left[2009 L_{-7}(2)-768 \sqrt{7} G\right]}{224 \pi ^2},\label{eq:b-112}
\end{split}
\end{align} which completes the evaluation of $E^{\varGamma_0(1)}\left(4z,2\right)-E^{\varGamma_0(1)}\left(z,2\right)$.

Before moving on, we remark that \eqref{eq:112minus} is a special case of \eqref{eq:fundKronecker} without additional twists, unlike [cf.\ \eqref{eq:a-112} and \eqref{eq:b-112}]\begin{align}
E^{\varGamma_0(1)}\left(4z,2\right)+
E^{\varGamma_0(1)}\big(2\sqrt{7}i,2\big)=\frac{41}{64}\frac{112}{4\zeta(4)}\zeta(2) L_{-7}(2)
\end{align} with an  extra rational factor $\frac{41}{64} $  attributable to the non-fundamental discriminant $ \disc(4z)=-112$. Here, one notes that if $ j(w_1),\dots,j(w_h)$ are distinct algebraic numbers sharing the same minimal polynomial of degree $h$,  and $\disc(w_{1})=\dots=\disc(w_h)=D\notin\{-3,-4\}$ is a fundamental discriminant, then Dirichlet's theorem \cite[(2.1)]{ZuckerRobertson1984}\begin{align}
\sum_{j=1}^hE^{\varGamma_0(1)}(w_{j},2)={}&-\frac{D}{4\zeta(4)} \zeta(2)L_{D}(2)\
\end{align} serves as a companion to \eqref{eq:fundKronecker}. \subsubsection{Proof of Theorem  \ref{thm:2k3k4kL2}\ref{itm:3kL2}}
\begin{table}[t]\caption{\label{tab:ReGRb}Special CM points associated with irrational series $ \mathsf\Sigma_{-1/3}^{GR}(z)$ that are expressible via Dirichlet $L$-values} {\tiny \begin{align*}\begin{array}{@{\,}c@{\,}|@{\,}c@{\,}c@{\,}c|@{\,}l@{\,}}\hline\hline
z-\frac12 & {}\frac{1-2\alpha_{3}(z)}{\I z} & \frac{R_{-1/3}(1-2\alpha_{3}(z))}{\I z} & \frac{27}{\alpha_3(z){[}1-\alpha_{3}(z)]}&\frac{4\pi^2\mathsf\Sigma_{-1/3}^{GR}(z)}{15\disc(z)} \\\hline\vphantom{\frac{\frac\int\int}1}\frac{\sqrt{555}i}{{30}} & \begin{smallmatrix}\frac{15 \left(2898 \sqrt{37}+3145\right)}{98494\sqrt{3}} \end{smallmatrix} & \begin{smallmatrix}\frac{6438 \sqrt{37}+30355}{49247\sqrt{3}} \end{smallmatrix} & \begin{smallmatrix}-2^6 3^3 \left(\frac{3 \sqrt{37}-17}{2}\right)^6 \end{smallmatrix} &\begin{smallmatrix}&L_{-111}(2)L_{5}(2)-L_{-3}(2)L_{185}(2)\hfill\\={}&\frac{4 \pi ^2}{25 \sqrt{5}}\left[L_{-111}(2)-\frac{380K}{37 \sqrt{37}}\right]\hfill\end{smallmatrix}\\
[8pt]
\frac{\sqrt{435}i}{{30}} & \begin{smallmatrix}&\frac{15}{2}\\&\times\frac{323 \sqrt{145}+13195}{12528 \sqrt{15}} \end{smallmatrix} & \begin{smallmatrix} \frac{2407 \sqrt{145}+35375}{12528 \sqrt{15}}\end{smallmatrix} & \begin{smallmatrix} &-\frac{432}{5} \\&\times\left(2975-247 \sqrt{145}\right)^2 \end{smallmatrix}&\begin{smallmatrix}&L_{-15}(2)L_{29}(2)-L_{-87}(2)L_{5}(2)\hfill\\={}&\frac{12\pi^{2}}{29 \sqrt{29}}\left[ L_{-15}(2)-\frac{29\sqrt{145}L_{-87}(2)}{375} \right]\hfill\end{smallmatrix} \\\frac{i}{\sqrt{3}} & \begin{smallmatrix}\frac{9\left(11 \sqrt{6}+51\right)}{125}  \end{smallmatrix} & \begin{smallmatrix}\frac{4\left(19 \sqrt{6}+54\right)}{125}  \end{smallmatrix} & \begin{smallmatrix} -\frac{3^3 \left(\sqrt{6}+2\right) \left(\sqrt{6}-1\right)^6}{\left(2 \sqrt{6}+5\right)^2}\end{smallmatrix} &\begin{smallmatrix}&\frac{9}{8}L_{-4}(2)L_{48}(2)-L_{-24}(2)L_{8}(2)\hfill\\={}&\frac{\sqrt{3}\pi^2}{16}\left[ G-\sqrt{\frac{2}{3}} L_{-24}(2)\right]\hfill\end{smallmatrix} \\\frac{\sqrt{195}i}{30} & \begin{smallmatrix} \frac{15 \left(9 \sqrt{13}+26\right)}{26 \sqrt{3}}\end{smallmatrix} & \begin{smallmatrix}\frac{39 \sqrt{13}+134}{13 \sqrt{3}} \end{smallmatrix} & \begin{smallmatrix}-2^6 3^3 \left(\frac{\sqrt{13}-3}{2}\right)^6 \end{smallmatrix} &\begin{smallmatrix}&L_{-39}(2)L_{5}(2)-L_{-3}(2)L_{65}(2)\hfill\\={}&\frac{4 \pi ^2}{25 \sqrt{5}}\left[L_{-39}(2)-\frac{80K}{13 \sqrt{13}}\right]\hfill\end{smallmatrix}\\
\frac{i}{\sqrt{6}} & \begin{smallmatrix} 17 \sqrt{3}+27\end{smallmatrix} & \begin{smallmatrix} \frac{2\left(16 \sqrt{3}+27\right)}{3} \end{smallmatrix} & \begin{smallmatrix} -\frac{2\cdot 3^3 \left(\sqrt{3}-1\right)}{\left(\sqrt{3}+2\right)^4}\end{smallmatrix}&\begin{smallmatrix}&\frac{11}8L_{-3}(2)L_{32}(2)-L_{-4}(2)L_{24}(2)\hfill\\={}&\frac{11\pi^{2}}{64 \sqrt{2}}\left( K-\frac{16G}{11 \sqrt{3}} \right)\hfill\end{smallmatrix} \\
[8pt]
\hline\vphantom{\frac{\frac\int\int}1}\frac{\sqrt{11}i}{6} & \begin{smallmatrix}\frac{3 \left(27 \sqrt{33}+91\right)}{32 \sqrt{11}} \end{smallmatrix} & \begin{smallmatrix}\frac{9 \left(25 \sqrt{33}+121\right)}{176 \sqrt{11}} \end{smallmatrix} & \begin{smallmatrix}32 \left(91 \sqrt{33}-523\right) \end{smallmatrix}&\begin{smallmatrix}&L_{-3}(2)L_{33}(2)-\frac{16}{27}\zeta (2)L_{-11}(2)\hfill\\={}&\frac{8\pi^{2}}{11 \sqrt{33}}\left[ K-\frac{11}{27}  \sqrt{\frac{11}{3}}L_{-11}(2)\right]\hfill\end{smallmatrix} \\
\frac{i}{2} & \begin{smallmatrix}3 \sqrt{3}+7 \end{smallmatrix} & \begin{smallmatrix}2 \left(\sqrt{3}+2\right) \end{smallmatrix} & \begin{smallmatrix} -24 \left(7 \sqrt{3}-12\right)\end{smallmatrix} &\begin{smallmatrix}&\frac{16}{3}\zeta (2)L_{-4}(2)-9L_{-3}(2)L_{12}(2)\hfill\\={}&\frac{8 \pi ^2}{9}\left( G-\frac{9 \sqrt{3}K}{16} \right)\hfill\end{smallmatrix}\\
[8pt]\hline\hline
\end{array}\end{align*}}
\end{table}

The following discriminants are all fundamental:\begin{align}
\disc\left( \frac{1}{2}+\frac{\sqrt{555}i}{{30}} \right)={}&-555=-3\cdot5\cdot37,\\\disc\left( \frac{1}{2}+\frac{\sqrt{435}i}{{30}} \right)={}&-435=-3\cdot5\cdot29,\\\disc\left( \frac{1}{2}+\frac{\sqrt{195}i}{{30}} \right)={}&-195=-3\cdot5\cdot13.
\end{align}The same statement holds true for the members of \begin{align}\{-111,5,-3,185,-15,29,-87,-39,65\}.\end{align}Therefore, the three corresponding entries in Table \ref{tab:ReGRb} can be handled with  ``fundamental applications'' of Kronecker's theorem.  To see this, we introduce $ z_1=5z$ and $z_2=15z$, 
so it follows that \begin{align}
\begin{split}
&-\frac{4\zeta(4)}{\disc(z)}\left[ E^{\varGamma_0(1)}(3z,2)- E^{\varGamma_0(1)}(z,2)-E^{\varGamma_0(1)}(z_{1},2)+E^{\varGamma_0(1)}(z_{2},2)\right]
\\={}{}&\begin{cases}L_{-111}(2)L_{5}(2) & \text{if }z=\frac{1}{2}+\frac{\sqrt{555}i}{{30}}, \\
L_{-15}(2)L_{29}(2) & \text{if }z=\frac{1}{2}+\frac{\sqrt{435}i}{{30}}, \\
 L_{-39}(2)L_{5}(2)& \text{if }z=\frac{1}{2}+\frac{\sqrt{195}i}{{30}}, \\
\end{cases}\end{split}
\end{align}and \begin{align}
\begin{split}
&\frac{4\zeta(4)}{\disc(z)}\left[ E^{\varGamma_0(1)}(3z,2)- E^{\varGamma_0(1)}(z,2)+E^{\varGamma_0(1)}(z_{1},2)-E^{\varGamma_0(1)}(z_{2},2)\right]
\\={}{}&\begin{cases}L_{-3}(2)L_{185}(2) & \text{if }z=\frac{1}{2}+\frac{\sqrt{555}i}{{30}}, \\
 L_{-87}(2)L_{5}(2)& \text{if }z=\frac{1}{2}+\frac{\sqrt{435}i}{{30}}, \\
 L_{-3}(2)L_{65}(2)& \text{if }z=\frac{1}{2}+\frac{\sqrt{195}i}{{30}}, \\
\end{cases}\end{split}
\end{align}  which together entail the corresponding entries in the last column of Table \ref{tab:ReGRb}. 

In  Table \ref{tab:ReGRb}, we also have two trickier cases of Scenario \ref{itm:sc1}, with non-fundamental discriminants:\begin{align}
\disc\left( \frac12+\frac{i}{\sqrt{3}} \right)={}&-192=-2^{6}\cdot3,
\\
\disc\left( \frac12+\frac{i}{\sqrt{6}} \right)={}&-96=-2^5\cdot3.\end{align}Now we switch to a new setting, where $z_1=4z$ and \begin{align}
z_2=\frac{\sqrt{-\disc(z)}i}{2}={}&\begin{cases}4\sqrt{3}i & \text{if }z=\frac{1}{2}+\frac{i}{\sqrt{3}}, \\
2\sqrt{6}i & \text{if }z=\frac{1}{2}+\frac{i}{\sqrt{6}}. 
 \\
\end{cases}
\end{align}We have \begin{align}
\begin{split}
&-\frac{4\zeta(4)}{\disc(z)}\left[ E^{\varGamma_0(1)}(3z,2)- E^{\varGamma_0(1)}(z,2)-E^{\varGamma_0(1)}(z_{1},2)+E^{\varGamma_0(1)}(z_{2},2)\right]
\\={}{}&\begin{cases}\frac{9}{8}L_{-4}(2)L_{48}(2) & \text{if }z=\frac{1}{2}+\frac{i}{\sqrt{3}}, \\
 \frac{11}8L_{-3}(2)L_{32}(2)& \text{if }z=\frac{1}{2}+\frac{i}{\sqrt{6}}, \\
\end{cases}\end{split}
\end{align}and \begin{align}
\begin{split}
&\frac{4\zeta(4)}{\disc(z)}\left[ E^{\varGamma_0(1)}(3z,2)- E^{\varGamma_0(1)}(z,2)+E^{\varGamma_0(1)}(z_{1},2)-E^{\varGamma_0(1)}(z_{2},2)\right]
\\={}{}&\begin{cases}L_{-24}(2)L_{8}(2) & \text{if }z=\frac{1}{2}+\frac{i}{\sqrt{3}}, \\
L_{-4}(2)L_{24}(2) & \text{if }z=\frac{1}{2}+\frac{i}{\sqrt{6}}, \\
\end{cases}\end{split}
\end{align}which involve additional twist factors (cf.\ \cite{GlasserZucker1980,ZuckerRobertson1984}).
 
 There are two entries in  Table \ref{tab:ReGRb} corresponding to Scenario \ref{itm:sc2}, both of which echo back to the last entry in Table \ref{tab:ReGRa}. We will elaborate on these entries in the next two paragraphs.

For $ z=\frac12+\frac{\sqrt{11}i}{6}$, we have \begin{itemize}
\item 
an integer $ j(3z)=-2^{15}=j\left( \frac{1+\sqrt{11}i}{2} \right)$ as well as a tabulated value    \cite[Table VI]{GlasserZucker1980}\begin{align}
E^{\varGamma_0(1)}(3z,2)=E^{\varGamma_0(1)}\left(\frac{1+\sqrt{11}i}{2},2\right)=\frac{165L_{-11}(2)}{4 \pi ^2}.
\end{align}\item a quadratic irrational \begin{align}
j(z)= \frac{\left(\frac{\sqrt{33}-5}{2}\right)^{15} \left(\frac{\sqrt{33}+5}{2}\right)^{18} \left(2 \sqrt{33}+11\right)^3 \left(\sqrt{33}-4\right)^3}{\left(23+4 \sqrt{33}\right)^5}
\end{align} that is conjugate to \begin{align}\begin{split}
&j\left( \frac{1+3\sqrt{11}i}{2} \right)\\={}&\frac{\left(\frac{\sqrt{33}+5}{2}\right)^{15} \left(\frac{\sqrt{33}-5}{2}\right)^{18} \left(11-2 \sqrt{33}\right)^3 \left(\sqrt{33}+4\right)^3}{\left(23-4 \sqrt{33}\right)^5},
\end{split}\end{align} along with a tabulated value    \cite[Table VI]{GlasserZucker1980}\begin{align}
\begin{split}&
E^{\varGamma_0(1)}\left(\frac{1+3\sqrt{11}i}{2},2\right)\\={}&\frac{99}{8 \zeta (4)}\left[ \frac{22\zeta(2)L_{-11}(2)}{27} +L_{-3}(2)L_{33}(2)\right].
\end{split}
\end{align}
\end{itemize}Combining the information above with an application of \eqref{eq:fundKronecker} for $h=2$:
\begin{align}
E^{\varGamma_0(1)}\left(\frac{1+3\sqrt{11}i}{2},2\right)-E^{\varGamma_0(1)}\left(z,2\right)=\frac{99L_{-3}(2)L_{33}(2)}{4\zeta(4)},
\end{align}we arrive at our claimed result.

For $ z=\frac{1+i}2$, we have\begin{itemize}
\item an integer $ j(z)=2^{6}3^{3}=j\left( i \right)$ 
along with a tabulated value \cite[Table VI]{GlasserZucker1980}\begin{align}
E^{\varGamma_0(1)}\left(z,2\right)=E^{\varGamma_0(1)}\left(i,2\right)=\frac{30L_{-4}(2)}{\pi^{2}}.
\end{align}\item a quadratic irrational $ j(3z)= \frac{\left(2-3 \sqrt{3}\right)^3 \left(2 \sqrt{3}-1\right)^3 \left( \sqrt{3} \right)^3\left(\sqrt{3}-1\right)^{12}}{\left(\sqrt{3}-2\right)^4}$ that is conjugate to $ j(3i)=\frac{\left(2+3 \sqrt{3}\right)^3 \left(2 \sqrt{3}+1\right)^3  \left( \sqrt{3} \right)^3 \left(\sqrt{3}+1\right)^{12}}{\left(\sqrt{3}+2\right)^4}$, as well as a tabulated value \cite[Table VI]{GlasserZucker1980}\begin{align}
E^{\varGamma_0(1)}\left(3i,2\right)=\frac{18}{4\zeta(4)}\left[ \frac{28\zeta(2)L_{-4}(2)}{27} +L_{-3}(2)L_{12}(2)\right].
\end{align}
\end{itemize}Again, with \eqref{eq:fundKronecker} for $h=2$, we have \begin{align}
E^{\varGamma_0(1)}\left(3i,2\right)-E^{\varGamma_0(1)}\left(3z,2\right)=\frac{36L_{-3}(2)L_{12}(2)}{4\zeta(4)},
\end{align}which will conclude our verification for Table \ref{tab:ReGRb} and Theorem \ref{thm:2k3k4kL2}\ref{itm:3kL2}.
 
\subsubsection{Proof of Theorem  \ref{thm:2k3k4kL2}\ref{itm:4kL2}}\begin{table}[t]\caption{\label{tab:ReGRc}Special CM points associated with irrational series $ \mathsf\Sigma_{-1/4}^{GR}(z)$ that are expressible via Dirichlet $L$-values} {\tiny \begin{align*}\begin{array}{@{\,}c@{\,}|@{\,}c@{\,}c@{\,}c|@{\,}l@{\,}}\hline\hline
z-\frac12 & \frac{{}1-2\alpha_{2}(z)}{\I z} & \frac{R_{-1/4}(1-2\alpha_{2}(z))}{\I z} & \frac{64}{\alpha_2(z){[}1-\alpha_{2}(z)]}&\frac{\pi^2\mathsf\Sigma_{-1/4}^{GR}(z) }{10\disc(z)}\\\hline
\vphantom{\frac{\frac\int\int}1}\frac{\sqrt{85}i}{10} & \begin{smallmatrix}\frac{35 \left(92 \sqrt{17}+2091\right)}{55233} \end{smallmatrix} & \begin{smallmatrix} \frac{6868 \sqrt{17}+36591}{110466}\end{smallmatrix} & \begin{smallmatrix} -2^{10} 3^4 \left(41-10 \sqrt{17}\right)^4\end{smallmatrix} &\begin{smallmatrix}&L_{-4}(2)L_{85}(2)-L_{-68}(2)L_{5}(2)\hfill\\={}&\frac{72\pi^{2}}{85 \sqrt{85}}\left[G-\frac{17 \sqrt{17}L_{-68}(2)}{90}\right]\hfill\end{smallmatrix}\\ [8pt]
\frac{\sqrt{253}i}{22} & \begin{smallmatrix}\frac{65 \left(11092 \sqrt{11}+19437\right)}{815409} \end{smallmatrix} & \begin{smallmatrix} &\frac{11937508 \sqrt{11}}{37508814}\\&+\frac{37515813}{37508814}\end{smallmatrix} & \begin{smallmatrix} &-2^{10} 3^4 7^4\\&\times \left(1121-338 \sqrt{11}\right)^4\end{smallmatrix}  &\begin{smallmatrix}&L_{-11}(2)L_{92}(2)-L_{-4}(2)L_{253}(2)\hfill\\={}&\frac{10\pi^{2}}{23 \sqrt{23}}\left[L_{-11}(2)-\frac{ 36G}{11\sqrt{11}}\right]\hfill\end{smallmatrix}\\
[8pt]
\hline
\vphantom{\frac{\frac{\frac11}\int}1}\frac{i}{\sqrt{2}} & \begin{smallmatrix}\frac{5\left(13 \sqrt{2}+32\right)}{49}  \end{smallmatrix} & \begin{smallmatrix} \frac{6\left(6 \sqrt{2}+11\right)}{49} \end{smallmatrix} & \begin{smallmatrix}-\frac{2^3 \left(3-\sqrt{2}\right)^4}{\sqrt{2}+1} \end{smallmatrix} &\begin{smallmatrix}&L_{-4}(2)L_{8}(2)-\frac{3}{8}\zeta(2)L_{-8}(2)\hfill\\={}&\frac{\pi^2}{8\sqrt{2}}\left[G-\frac{L_{-8}(2)}{\sqrt{2}}\right]\hfill\end{smallmatrix}\\
[8pt]
\hline\hline
\end{array}\end{align*}}
\end{table}
Both \begin{align}
\disc\left( \frac{1}{2}+\frac{ \sqrt{85}i}{10}\right)={}&-340=-2^2\cdot5\cdot17\intertext{and}\disc\left( \frac{1}{2}+\frac{ \sqrt{253}i}{22} \right)={}&-1012=-2^2\cdot11\cdot23
\end{align}are fundamental discriminants, and so are the numbers in the set $ \{-4,85,-68,5,-11,92,253\}$. Indeed, the first two entries of Table \ref{tab:ReGRc} are both tractable by ``fundamental applications'' of Kronecker's theorem. Setting  \begin{align}
z_{1}=\frac{1+\sqrt{-\disc(z)/4}i}{2}=\begin{cases}\frac{1+\sqrt{85}i}{2} & \text{if }z=\frac{1}{2}+\frac{ \sqrt{85}i}{10}, \\
\frac{1+\sqrt{253}i}{2} & \text{if }z=\frac{1}{2}+\frac{ \sqrt{253}i}{22}, \\
\end{cases}
\end{align}and \begin{align}
z_{2}=\sqrt{-\disc(z)/4}i=\begin{cases}\sqrt{85}i & \text{if }z=\frac{1}{2}+\frac{ \sqrt{85}i}{10}, \\
\sqrt{253}i & \text{if }z=\frac{1}{2}+\frac{ \sqrt{253}i}{22}, \\
\end{cases}
\end{align} one gets\begin{align}
\begin{split}&
-\frac{4\zeta(4)}{\disc(z)}\left[ E^{\varGamma_0(1)}(2z,2)- E^{\varGamma_0(1)}(z,2)-E^{\varGamma_0(1)}(z_{1},2)+E^{\varGamma_0(1)}(z_{2},2)\right]
\\={}&\begin{cases}L_{-4}(2)L_{85}(2) & \text{if }z=\frac{1}{2}+\frac{ \sqrt{85}i}{10}, \\
L_{-11}(2)L_{92}(2) & \text{if }z=\frac{1}{2}+\frac{ \sqrt{253}i}{22}, \\
\end{cases}\end{split}
\end{align}and
\begin{align}
\begin{split}&
\frac{4\zeta(4)}{\disc(z)}\left[ E^{\varGamma_0(1)}(2z,2)- E^{\varGamma_0(1)}(z,2)+E^{\varGamma_0(1)}(z_{1},2)-E^{\varGamma_0(1)}(z_{2},2)\right]
\\={}&\begin{cases}L_{-68}(2)L_{5}(2) & \text{if }z=\frac{1}{2}+\frac{ \sqrt{85}i}{10}, \\
L_{-4}(2)L_{253}(2) & \text{if }z=\frac{1}{2}+\frac{ \sqrt{253}i}{22}, \\
\end{cases}\end{split}
\end{align}thereby  confirming \eqref{eq:d=-340} and \eqref{eq:d=-1012}, respectively.

The last entry of Table \ref{tab:ReGRc} hearkens back to its counterpart in Table \ref{tab:ReGRa}.
Here, $ j(2z)=2^6\cdot5^3=j\left( \sqrt{2} i\right)$ is an integer, so  \cite[Table VI]{GlasserZucker1980} entails\begin{align}
E^{\varGamma_0(1)}(2z,2)=E^{\varGamma_0(1)}(\sqrt{2}i,2)=\frac{30L_{-8}(2)}{\pi^{2}}.
\end{align}In the meantime, $ j(z)=\left(190-130 \sqrt{2}\right)^3$ is a quadratic irrational, with Galois conjugate $ j(2\sqrt{2}i)=\left(190+130 \sqrt{2}\right)^3$.
Kronecker's theorem for $h=2$ brings us \begin{align}
E^{\varGamma_0(1)}(z,2)-E^{\varGamma_0(1)}(2\sqrt{2}i,2)=-\frac{32}{4\zeta(4)}L_{-4}(2)L_8(2),
\end{align}while  \cite[Table VI]{GlasserZucker1980}  allows us to compute\begin{flalign}E^{\varGamma_0(1)}(2\sqrt{2}i,2)=\frac{4}{\zeta(4)}\left[ \frac{7\zeta(2)L_{-8}(2)}{8} +L_{-4}(2)L_8(2)\right].\end{flalign} Now the proof is complete.

\end{document}